\newenvironment{proof}{{\em Proof.} }{\hfill$\Box$\vspace{0.1in}}
\newtheorem{lemma}{Lemma}[section]
\newtheorem{theorem}{Theorem}[section]
\numberwithin{equation}{section}
\newcommand{\ep}{{\varepsilon}}
\newcommand{\ina}{{\langle i\nabla\rangle}}
\newcommand{\p}{{\partial}}
\title{Scattering problem for Klein-Gordon equation with cubic convolution nonlinearity
\thanks{Supported by NSFC (10931007) and Zhejiang Provincial
NSF of China (Y6090158)}}
\author{ Ruying  Xue\thanks{E-mail address: ryxue@zju.edu.cn}\\ {\small Department of  Mathematics, Zhejiang University,}\\
{\small Hangzhou 310027, Zhejiang, P.  R.  China }}
\date{}
\begin{document}
 \maketitle
\pagestyle{myheadings} \markboth{\hfill\small R.
Xue/Scattering Problem for Klein-Gordon equation \hfill} {\hfill\small R.
Xue/Scattering Problem for Klein-Gordon equation\hfill}
\begin{abstract}The scattering problem for the Klein-Gordon equation
with cubic convolution nonlinearity is considered. Based on the Strichartz estimates
for the inhomogeneous Klein-Gordon equation, we  prove the existence of the scattering operator, which improves the
known results in some sense.

{\bf{Keywords:}} Asymptotic of solution;  Klein-Gordon equation; scattering operator

{\bf {Subject class:}} 35P25, 81Q05, 35B05
\end{abstract}

\section{Introduction}

This paper is concerned with the scattering problem for the
nonlinear Klein-Gordon equation of the form
\begin{equation}\left\{\begin{array}{l}\partial_t^2u-\Delta u+u=F_\gamma (u)\qquad (t,x)\in {R}\times  {R}^n\\
u|_{t=0}=f(x),\, \partial_t u|_{t=0}=g(x)\end{array}\right.\label{1.1}\end{equation}
 where u is a real-valued or a complex-valued unknown function
of $(t, x)\in R\times R^n$. The nonlinearity
is a cubic convolution term  $F_\gamma (u)=-(V_\gamma (x)\ast |u|^2)u$ with
$|V_\gamma(x)|\le C|x|^{-\gamma}$. Here, $0 <\gamma< n$ and $\ast$ denotes the convolution in the space variables. The term $F_\gamma (u)$ is an approximative expression of the nonlocal interaction of specific elementary
particles. The equation (\ref{1.1}) was studied by Menzala and Strauss  in \cite{[1]}.

In order to   define the scattering operator for (\ref{1.1}), we first give some Banach spaces.
The usual Lebesgue space is given by $L^p=\{\phi\in S^\prime: \|\phi\|_{L^p}<+\infty\}$, where the norm $\|\phi\|_{L^p}=\{\int_{R^n}|\phi(x)|dx\}^{1/p}$ if $1\le p<+\infty$ and $\|\phi\|_{L^\infty}=\sup _{x\in R^n}|\phi(x)|$ if $p=+\infty$. The weighted Sobolev space $H^{\beta,k}_p$ is defined by
$$H^{\beta,k}_p=\{\phi\in S^\prime: \|\phi\|_{H^{\beta,k}_p}=\|\langle x\rangle^k\ina^\beta\phi\|_{L^p}<+\infty\},$$
with $\langle x\rangle=\sqrt{1+x^2}$ and $\ina=\sqrt{1-\bigtriangleup}$. We also write $H^{\beta,k}=H^{\beta,k}_2$ and $H^\beta=H^{\beta,0}_2$ if it does not cause a confusion. A Hilbert space $X^{\beta,k}$ is denoted by $H^{\beta,k}\bigoplus H^{\beta-1,k}$. Let $X_\rho^{\beta,k}$ be
a ball of a radius $\rho>0$ with a center in the origin in the space $X^{\beta,k}$. The scattering operator of (\ref{1.1}) is defined as the mapping $S: X_\rho^{\beta,k} \ni
(f_-, g_-)\to  (f_+, g_+)\in X^{\beta,0}$ if the following condition holds:\newline
{\it For   $(f_-, g_-)\in  X_\rho^{\beta,k}$, there uniquely exists  a time-global solution $u\in C(R; H^\beta)$
of (\ref{1.1}), and data $(f_+, g_+)\in X^{\beta, 0}$ such that $u(t)$ approaches $u_\pm(t)$ in
$H^\beta$ as $t\to\pm\infty$, where $u_\pm(t)$ are solutions of linear Klein-Gordon equations whose initial data are $(f_\pm, g_\pm)$, respectively.}

We say that  $(S,X^{\beta,k})$  is well-defined if we can define the scattering operator $S:   X_\rho^{\beta,k}\to   X^{\beta,0}$  for some $\rho>0$. In \cite{[2]},  Mochizuki prove that if $n\ge 3$, $\beta\ge 1$, $\gamma< n$ and $2\le\gamma\le 2\beta+2$,
then $(S,X^{\beta,0})$ is well-defined.  Hidano \cite{[3]} see that if $ n\ge 2$, $\beta\ge 1$, $4/3<\gamma<2$ and $k>1/3$, then $(S,X^{\beta,k})$ is well-defined.  By using the
Strichartz estimate for pre-admissible pair and the complex interpolation method
for the weighted Sobolev space,  Hidano \cite{[4]} shows  that $(S,X^{\beta, k})$ is well-defined if $n\ge 2$, $\beta\ge 1$, $4/3<\gamma<2$ and $k>(2-\gamma)/2$. Our aim of this article is to  show that $(S, X^{\beta,1})$ is well-defined if $n\ge 2$,
\begin{equation}1<\gamma<\min\{\frac {2(n+1)}{n+2},\, \frac{3n-2}{n+2}\}, \frac{(n+2)(\gamma+1)}{4n}+\frac 12<\beta< \frac{(n+2)(\gamma+1)}{2n}.\label{1.1a}\end{equation}
More precisely, we prove the following theorem.
\begin{theorem}\label{T1}Let the function $V_\gamma(x)$ satisfy
 $$|V_\gamma(x)|\le C|x|^{-\gamma}, \, |\nabla V_\gamma(x)|\le C|x|^{-(1+\gamma)}.$$
 Assume that $n\ge 2$, $\gamma$ and $\beta$ satisfy (\ref{1.1a}). Then  there exists a positive
number $\delta_0>0$ satisfying the following properties:

(1). For $(f,g)\in X^{\beta,1}$ with $\|(f,g)\|_{X^{\beta,1}}\le \delta_0$, there uniquely exist finial states
$(f_\pm, g_\pm)\in X^{\beta,0}$ and  a solution $u(t)\in C(R; H^\beta)$ of (\ref{1.1}) such that $u(t)$ approaches $u_\pm(t)$ in $X^{\beta,0}$ as $t\to\pm\infty$, where $u_\pm(t)$ are solutions of the linear Klein-Gordon equation with initial data $(f_\pm, g_\pm)$, respectively. Moreover, as $\pm t$ large enough we have
$$\|(u(t),\partial_t u(t))-(u_\pm (t),\partial_t u_\pm (t))\|_{X^{\beta,0}}\le C\langle t\rangle^{-\delta}$$
with $\delta=\frac{2n\beta}{n+2}-2>0$.

(2).  For $(f_-, g_-)\in X^{\beta,1}$ with $\|(f_-, g_-)\|_{X^{\beta,1}}\le \delta_0$, there uniquely exists a finial state $(f_+, g_+)\in X^{\beta,0}$ and  a solution $u(t)\in C(R; H^\beta)$ of (\ref{1.1}) such that $u(t)$ approaches $u_\pm(t)$ in $X^{\beta,0}$ as $t\to\pm\infty$, where $u_\pm(t)$ are solutions of the linear Klein-Gordon equation with initial data $(f_\pm, g_\pm)$, respectively. Moreover, as $\pm t$ large enough we have
$$\|(u(t),\partial_t u(t))-(u_\pm (t),\partial_t u_\pm (t))\|_{X^{\beta,0}}\le C\langle t\rangle^{-\delta}$$
with $\delta=\frac{2n\beta}{n+2}-2>0$.\end{theorem}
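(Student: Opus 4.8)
The plan is to recast (\ref{1.1}) as an integral equation via Duhamel's principle and then run a contraction argument in a time-weighted space-time norm, drawing the necessary linear bounds from the inhomogeneous Strichartz estimates for the Klein-Gordon equation that underlie this work. Writing the free propagators as $\cos(t\ina)$ and $\ina^{-1}\sin(t\ina)$, a solution of (\ref{1.1}) satisfies
$$u(t)=\cos(t\ina)f+\ina^{-1}\sin(t\ina)g+\int_0^t\ina^{-1}\sin((t-s)\ina)F_\gamma(u(s))\,ds,$$
so that the asymptotic states should be
$$\binom{f_\pm}{g_\pm}=\binom{f}{g}+\int_0^{\pm\infty}\mathcal{U}(-s)\binom{0}{F_\gamma(u(s))}\,ds,$$
where $\mathcal{U}(t)$ denotes the free evolution group on $X^{\beta,0}$. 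Hence the existence of the scattering states, and the quantitative rate, both reduce to the convergence in $X^{\beta,0}$ of this Duhamel integral together with the tail bound $\|\int_t^{\pm\infty}\mathcal{U}(-s)(0,F_\gamma(u(s)))\,ds\|_{X^{\beta,0}}\lesssim\langle t\rangle^{-\delta}$.

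Next I would fix a complete metric space $Y$ whose norm measures both the energy $\sup_t\|u(t)\|_{H^\beta}$ and a dispersive component $\sup_t\langle t\rangle^{\sigma}\|u(t)\|_{L^{p}}$ for an admissible Strichartz exponent $p$ and a weight $\sigma$ dictated by (\ref{1.1a}), and define the map $\Phi(u)$ to be the right-hand side of the Duhamel formula. The single spatial weight in $X^{\beta,1}$ is what upgrades the Strichartz bound into genuine time decay: commuting $\langle x\rangle$ through $\cos(t\ina)$ and $\ina^{-1}\sin(t\ina)$ costs one unit of frequency regularity and produces the factor of $t$ responsible for the gain $\langle t\rangle^{-\sigma}$. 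One then checks that $\Phi$ maps the ball of radius comparable to the data size into itself and contracts, provided the data norm is at most some $\delta_0$; this yields a unique global $u\in C(\R;H^\beta)\cap Y$.

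The heart of the argument is the trilinear estimate
$$\|F_\gamma(u(s))\|_{H^{\beta-1}}\lesssim\langle s\rangle^{-1-\delta}\,\|u\|_Y^3,\qquad\delta=\tfrac{2n\beta}{n+2}-2,$$
together with its weighted analogue in $H^{\beta-1,0}$ after the weight has been commuted past the propagator. To prove it I would apply the Hardy--Littlewood--Sobolev inequality to the convolution, using $|V_\gamma(x)|\le C|x|^{-\gamma}$ to shift integrability by $(n-\gamma)/n$, bound the remaining factor of $u$ by H\"older, and distribute the $\beta-1$ derivatives through the product by the fractional Leibniz rule and Sobolev embedding; the hypothesis $|\nabla V_\gamma|\le C|x|^{-(1+\gamma)}$ enters when one differentiates the convolution. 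The admissibility window (\ref{1.1a}) is precisely the intersection of constraints forcing every H\"older and HLS exponent to be legitimate while leaving a surplus that exponentiates to the integrable decay $\langle s\rangle^{-1-\delta}$.

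Finally, integrating this trilinear bound gives the convergence of the Duhamel integral and the stated rate $\|(u,\p_t u)-(u_\pm,\p_t u_\pm)\|_{X^{\beta,0}}\lesssim\langle t\rangle^{-\delta}$, establishing part (1) by running the contraction with data prescribed at $t=0$. For part (2) I would instead solve the final-state problem, replacing $\int_0^t$ by $-\int_{-\infty}^t$ so that $u$ matches the prescribed incoming free solution with data $(f_-,g_-)$ as $t\to-\infty$; the identical estimates furnish a unique such $u$ and its forward asymptotic state $(f_+,g_+)$. I expect the trilinear estimate of the third paragraph to be the main obstacle: one must simultaneously absorb the long-range loss from small $\gamma$, account for the $\beta-1$ derivatives landing on a cubic convolution term, and still retain time integrability, and it is this three-way balance that pins the exponents to the narrow range (\ref{1.1a}).
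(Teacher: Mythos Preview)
Your outline has the right shape (Duhamel, contraction, HLS + Leibniz on the cubic convolution) but the mechanism you propose for obtaining and propagating time decay is not the one that actually closes. Saying that ``commuting $\langle x\rangle$ through $\cos(t\ina)$ and $\ina^{-1}\sin(t\ina)$ \dots\ produces the factor of $t$ responsible for the gain $\langle t\rangle^{-\sigma}$'' is inverted: the commutator $[x,e^{\pm it\ina}]$ produces a \emph{loss} of a power of $t$, not a gain. The gain comes from the Klainerman--Sobolev type inequality
\[
\|\phi\|_{L^p}\le C\langle t\rangle^{-\frac n2(1-\frac 2p)}\bigl(\|\phi\|_{H^\alpha}+\|J_\ep\phi\|_{H^{\alpha-1}}\bigr),\qquad J_\ep=\ina x+i\ep t\nabla,
\]
so that pointwise decay is equivalent to uniform-in-time control of $J_\ep u$. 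Your space $Y$, carrying only $\sup_t\|u\|_{H^\beta}$ and $\sup_t\langle t\rangle^\sigma\|u\|_{L^p}$, does not see $J_\ep u$ (or the Lorentz boost $P=t\nabla+x\partial_t$), and without such a component the Duhamel term cannot be shown to sit back in $Y$: there is no direct route from a bound on $\|F_\gamma(u(s))\|_{H^{\beta-1}}$ to time-decay of $\int_0^t\ina^{-1}\sin((t-s)\ina)F_\gamma\,ds$ in $L^p$.

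What the paper does instead is work with the first-order variables $w^\ep=i\ina^{-1}\partial_tu-\ep u$ and close in a norm containing $\|w\|_{L^\infty_tH^\beta}$, a Strichartz piece $\|w\|_{L^r_tH^{\beta-\mu}_q}$, and crucially $\|Pw\|_{L^\infty_tH^{\beta-1}}+\|Pw\|_{L^r_tL^q}$ and $\|Jw\|_{L^\infty_tH^{\beta-1}}$. The $P$-equation is obtained via the commutator identities $[L_\ep,P]=-i\ep\ina^{-1}\nabla L_\ep$ and $P\bigl((V_\gamma\ast\bar v\,v)v\bigr)=(V_\gamma\ast\bar v\,v)Pv+(t\nabla V_\gamma\ast\bar v\,v)v$; the hypothesis $|\nabla V_\gamma|\le C|x|^{-(1+\gamma)}$ is used \emph{here}, to treat the extra $t\nabla V_\gamma$ term as a convolution against $|x|^{-(\gamma+1)}$ with an accompanying factor of $t$ that is absorbed by the Klainerman--Sobolev decay (this is the computation with $p_3,s_3$ in the paper). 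Control of $Jw$ then follows from $J_\ep=i\ep P-\ep L_\ep x$. Your sentence that $|\nabla V_\gamma|$ ``enters when one differentiates the convolution'' via fractional Leibniz misses this: fractional derivatives do not land on $V_\gamma$ (HLS handles that side), it is the boost $P$ that forces $\nabla V_\gamma$ to appear. So the missing ingredient is the vector-field norm for $P$ (or $J$), the associated commutator algebra, and the decay lemma tying $J$-control to $L^p$ decay; without these the contraction in your $Y$ does not close.
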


In this article we denote by  $J_\ep=\ina x+i\ep t \nabla$ ,  $L_\ep=i\p_t-\ep\ina$ and $P=t\nabla+x\partial_t$ with $\ep\in\{+, -\}$. For a given Banach space with norm $\|\cdot\|$ and a vector $v=(v^+,v^-)$,  denote by
$$\|v\|=\|v^+\|+\|v^-\|, \, \|Pv\|=\|Pv^+\|+\|Pv^-\|, $$
$$\|Jv\|=\|J_+v^+\|+\|J_-v^-\|, \, \|Lv\|=\|L_+v^+\|+\|L_-v^-\|.$$  We also denote by the space-time norm
$$\|\phi\|_{L_t^r(I,L_x^q)}=\|\|\phi(t)\|_{L_x^q(R^n)}\|_{L_t^r(I)},$$
where I is a bounded or unbounded time interval, and denote different positive constants by the same letter C.

The rest of the article is organized as follows. In Section 2 we give some
preliminary calculations. Then Section 3 is devoted to the proof of Theorem \ref{T1}.

\section{Preliminaries}

 In this section, we prove some lemmas for our main results. Let  $w^{\ep}=i\partial_t\ina^{-1}u-\varepsilon u$  with $\ep\in \{+, -\}$. Then the Klein-Gordon equation (\ref{1.1}) can be be rewritten as a system of equations
\begin{equation}\left\{\begin{array}{l}L_\ep w^{\ep}=\ina^{-1}F_\gamma (u)\\
w^\ep|_{t=0}=w_0^\ep\end{array}\right.\label{1.3}\end{equation}
where $L_\ep=i\p_t-\ep\ina$, $w_0^\ep=i\ina^{-1}g+\ep f$. By the fact that
$$u=\frac 12(w^+-w^-), \partial_tu=-\frac i2\ina (w^++w^-),$$
we can rewrite the term $F_\gamma (u)$ as
$$F_\gamma (u)=\sum_{\ep_1,\ep_2,\ep_3\in \{+,-\}}C_{\ep_1\ep_2\ep_3}(V_\gamma\ast\overline{w^{\ep_1}}w^{\ep_2})w^{\ep_3}$$
with some constants $C_{\ep_1\ep_2\ep_3}$. Denote   $U_\ep (t)\varphi=e^{-\ep i\ina t}\varphi$ and for given $T\in R$,
$$\Psi_\ep [g]=\int_T^t U_\ep (t-\tau)\ina^{-1}g(\tau)d\tau,$$

\begin{lemma}\label{L1.1} Let $2\le q<\frac{2n}{n-2}$, $\frac 2r=\frac n2(1-\frac 2q)$.  Then for any time interval I and for any given $T\in I$
the following estimates are true:
$$\|\Psi_\ep [g]\|_{L_t^r(I,L^q)}\le \|g\|_{L_t^{r\prime}(I,H_{q\prime}^{2\mu-1})},$$
$$\|\Psi_\ep [g]\|_{L_t^\infty(I,L^2)}\le \|g\|_{L_t^{r\prime}(I,H_{q\prime}^{\mu-1})},$$and
$$\|U_\ep(t)\varphi\|_{L_t^r(I,L^q)}\le \|\varphi\|_{H^\mu},$$
where $r^\prime=\frac r{r-1}$, $q^\prime=\frac q{q-1}$ and $\mu=\frac 12(1+\frac n2)(1-\frac 2q)$.\end{lemma}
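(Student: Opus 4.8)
The proof rests on the dispersive decay estimate for the Klein-Gordon group together with a $TT^*$ argument. First I would record the two endpoint bounds for $U_\ep(t)=e^{-\ep i\ina t}$: the $L^2$ conservation $\|U_\ep(t)\varphi\|_{L^2}=\|\varphi\|_{L^2}$, valid since $U_\ep$ is unitary, and the pointwise decay estimate
$$\|U_\ep(t)\ina^{-(n+2)/2}\varphi\|_{L^\infty}\le C|t|^{-n/2}\|\varphi\|_{L^1}.$$
Interpolating these by the Riesz-Thorin theorem with $\theta=1-\frac2q$ yields the key dispersive estimate
$$\|U_\ep(t)\varphi\|_{L^q}\le C|t|^{-2/r}\|\ina^{2\mu}\varphi\|_{L^{q'}},$$
since $\theta\cdot\frac n2=\frac n2(1-\frac2q)=\frac2r$ and $\theta\cdot\frac{n+2}{2}=(1+\frac n2)(1-\frac2q)=2\mu$. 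The $L^\infty$ decay bound, whose derivative loss $(n+2)/2$ reflects the mixed wave-like and Schr\"odinger-like behaviour of the symbol $\langle\xi\rangle$ at high and low frequencies, is the substantive analytic input; it is obtained from stationary phase after a Littlewood-Paley decomposition, and I regard it as the main obstacle of the argument.

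Granting the dispersive estimate, the third (homogeneous) bound follows from the standard $TT^*$ method. Setting $T\varphi=U_\ep(\cdot)\ina^{-\mu}\varphi$, it suffices to bound $\|T\varphi\|_{L_t^r(I,L^q)}\le C\|\varphi\|_{L^2}$, which is equivalent to boundedness of $TT^*G=\int_I U_\ep(t-s)\ina^{-2\mu}G(s)\,ds$ from $L_t^{r'}(I,L^{q'})$ into $L_t^r(I,L^q)$. Inserting the dispersive estimate inside the integral gives $\|U_\ep(t-s)\ina^{-2\mu}G(s)\|_{L^q}\le C|t-s|^{-2/r}\|G(s)\|_{L^{q'}}$, and the Hardy-Littlewood-Sobolev inequality in the time variable closes the estimate precisely because the admissibility relation $\frac2r=n(\frac12-\frac1q)$ makes the convolution exponents match. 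The hypothesis $q<\frac{2n}{n-2}$ forces $r>2$, keeping us away from the endpoint where the fractional integration fails; taking $\varphi=\ina^\mu\psi$ then gives the third estimate.

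The first estimate is proved the same way, now for the retarded operator. Writing $\ina^{-1}=\ina^{-2\mu}\ina^{2\mu-1}$ and applying the dispersive estimate to the integrand of $\Psi_\ep[g]=\int_T^t U_\ep(t-\tau)\ina^{-1}g(\tau)\,d\tau$ gives $\|U_\ep(t-\tau)\ina^{-1}g(\tau)\|_{L^q}\le C|t-\tau|^{-2/r}\|\ina^{2\mu-1}g(\tau)\|_{L^{q'}}$. Bounding the truncated integral by the full convolution and invoking Hardy-Littlewood-Sobolev (or the Christ-Kiselev lemma, available since $r>r'$) yields $\|\Psi_\ep[g]\|_{L_t^r(I,L^q)}\le C\|g\|_{L_t^{r'}(I,H_{q'}^{2\mu-1})}$.

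Finally the second estimate is the energy estimate, and I would establish it by duality against the third estimate rather than by interpolation. Splitting $\ina^{-1}=\ina^{-\mu}\ina^{\mu-1}$ and testing $\Psi_\ep[g](t)$ against $\phi\in L^2$, one transfers $U_\ep(t-\tau)$ and $\ina^{-\mu}$ onto $\phi$ to obtain $\langle\Psi_\ep[g](t),\phi\rangle=\int_T^t\langle\ina^{\mu-1}g(\tau),\ina^{-\mu}U_\ep(\tau-t)\phi\rangle\,d\tau$. H\"older in space and then in time bounds this by $\|g\|_{L_t^{r'}(I,H_{q'}^{\mu-1})}\,\|U_\ep(\cdot-t)\ina^{-\mu}\phi\|_{L_\tau^r(I,L^q)}$, and the last factor is controlled by $\|\ina^{-\mu}\phi\|_{H^\mu}=\|\phi\|_{L^2}$ through the third estimate applied to the datum $\ina^{-\mu}\phi$. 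Taking the supremum over $\|\phi\|_{L^2}\le1$ and over $t\in I$ gives $\|\Psi_\ep[g]\|_{L_t^\infty(I,L^2)}\le C\|g\|_{L_t^{r'}(I,H_{q'}^{\mu-1})}$, which completes the proof.
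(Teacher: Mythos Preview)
Your proposal is correct and follows precisely the route the paper indicates: the paper does not give a detailed proof but simply states that Lemma~\ref{L1.1} ``is based on the duality argument along with the $L^p$--$L^q$ time decay estimates'' and refers to \cite{[5]}. Your sketch supplies exactly these ingredients---the interpolated dispersive bound $\|U_\ep(t)\varphi\|_{L^q}\le C|t|^{-2/r}\|\ina^{2\mu}\varphi\|_{L^{q'}}$, the $TT^*$ argument combined with Hardy--Littlewood--Sobolev in time for the homogeneous and inhomogeneous $L^r_tL^q_x$ estimates, and the duality pairing against the homogeneous estimate for the $L^\infty_tL^2_x$ bound---so there is nothing to add.
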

The proof of Lemma \ref{L1.1} is based on  the duality argument   along with the
$L^p-L^q$ time decay estimates. The similar result be found  in \cite{[5]}.

\begin{lemma}\label{L1.2}Assume $2\le p<\frac{2n}{n-2}$ for $n\ge3$ ($2\le p<+\infty$ for   $n=2$), denote by $\alpha=(1+\frac n2)(1-\frac 2p)$. The estimate is valid
	$$\|\phi\|_{L^p}\le C\langle t\rangle^{-\frac n2(1-\frac 2p)}(\|\phi\|_{H^\alpha}+\|J_\ep\phi\|_{H^{\alpha-1}}),$$
for  all $t\in R$, provided that the right-hand side is finite.\end{lemma}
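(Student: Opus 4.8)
The plan is to strip off the time dependence by conjugating with the free propagator and then trade the weight $x$ for the operator $J_\ep$. The starting point is the commutation identity
$$J_\ep = U_\ep(t)\,\ina x\,U_\ep(-t),$$
which one checks on the Fourier side: since $U_\ep(t)$ acts as multiplication by $e^{-\ep it\langle\xi\rangle}$ while $x$ acts as $i\nabla_\xi$, a direct computation gives $U_\ep(t)\,x\,U_\ep(-t)=x+\ep it\,\ina^{-1}\nabla$, and multiplying by $\ina$ (which commutes with $U_\ep$ and with $\nabla$) yields $\ina x+\ep it\nabla=J_\ep$. Equivalently, setting $\psi:=U_\ep(-t)\phi$ one gets $x\psi=U_\ep(-t)\ina^{-1}J_\ep\phi$. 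Because $U_\ep(\pm t)$ is a Fourier multiplier of modulus one, it is unitary on every $H^s$, so
$$\|\psi\|_{H^\alpha}=\|\phi\|_{H^\alpha},\qquad \|x\psi\|_{H^\alpha}=\|\ina^{-1}J_\ep\phi\|_{H^\alpha}=\|J_\ep\phi\|_{H^{\alpha-1}}.$$
These two identities are precisely what convert a decay-with-weight estimate into the desired decay-with-$J_\ep$ estimate.

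For $|t|\ge1$ I would invoke the $L^{p'}$--$L^p$ dispersive estimate for the Klein--Gordon group that underlies Lemma \ref{L1.1} (the $L^p$--$L^q$ time-decay estimate referred to there), namely
$$\|\phi\|_{L^p}=\|U_\ep(t)\psi\|_{L^p}\le C|t|^{-\frac n2(1-\frac2p)}\|\psi\|_{H^\alpha_{p'}},$$
the number of derivatives $\alpha=(1+\frac n2)(1-\frac2p)=(n+2)(\frac12-\frac1p)$ being exactly the one dictated by the stationary-phase analysis of $e^{-\ep it\ina}$. It then remains to prove the weighted embedding
$$\|\psi\|_{H^\alpha_{p'}}\le C\big(\|\psi\|_{H^\alpha}+\|x\psi\|_{H^\alpha}\big),$$
for combining the three displays, together with $\langle t\rangle\approx|t|$ on $|t|\ge1$, gives the claim on that range. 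For $|t|\le1$ one simply uses $\langle t\rangle\approx1$ and the ordinary Sobolev embedding $H^\alpha\hookrightarrow L^p$ (valid since $\alpha>n(\frac12-\frac1p)$), which upgrades $|t|^{-\frac n2(1-\frac2p)}$ to $\langle t\rangle^{-\frac n2(1-\frac2p)}$.

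To prove the weighted embedding I would set $g=\ina^\alpha\psi$ and reduce to the scalar inequality $\|g\|_{L^{p'}}\le C(\|g\|_{L^2}+\|xg\|_{L^2})$; here the commutator $[\ina^\alpha,x_j]$ is a multiplier of order $\alpha-1$, so $\|x\,\ina^\alpha\psi\|_{L^2}\le\|x\psi\|_{H^\alpha}+C\|\psi\|_{H^{\alpha-1}}$ and the reduction costs only lower-order terms already present on the right. For the scalar inequality, with $\sigma\in(0,1]$ and $\frac1{p'}=\frac1s+\frac12$, Hölder gives
$$\|g\|_{L^{p'}}\le\|\langle x\rangle^{-\sigma}\|_{L^s}\,\|\langle x\rangle^\sigma g\|_{L^2}\le C\|\langle x\rangle^{-\sigma}\|_{L^s}\big(\|g\|_{L^2}+\|xg\|_{L^2}\big),$$
the last step using $\langle x\rangle^\sigma\le\langle x\rangle\lesssim1+|x|$ since $\sigma\le1$. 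The factor $\|\langle x\rangle^{-\sigma}\|_{L^s}$ is finite iff $\sigma s>n$, i.e. $\sigma>n(\frac1{p'}-\frac12)=\frac n2(1-\frac2p)$, and such a $\sigma\le1$ exists precisely when $\frac n2(1-\frac2p)<1$, which is exactly the hypothesis $p<\frac{2n}{n-2}$ (automatic for $n=2$, $p<\infty$).

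The genuine obstruction in the self-contained part of the argument is this $p$-range in the weighted embedding; everything else is commutator bookkeeping and the unitarity of $U_\ep$ on Sobolev spaces. The technically heaviest ingredient is the dispersive input itself: the rate $|t|^{-\frac n2(1-\frac2p)}$ is the Schrödinger (low-frequency) rate, and checking that the high-frequency (wave) part does not impose a slower rate or more than $\alpha$ derivatives is the delicate point there. Since this is the content of the time-decay estimate feeding Lemma \ref{L1.1}, I would quote it rather than reprove it.
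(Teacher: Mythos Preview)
Your argument is correct and follows the standard route: conjugate out the free evolution to turn $J_\ep$ into the weight $x$, apply the $L^{p'}$--$L^p$ dispersive estimate for $e^{-\ep it\ina}$ with $(n+2)(\tfrac12-\tfrac1p)$ derivatives, and close via the weighted embedding $H^{\alpha}\cap\langle x\rangle^{-1}H^{\alpha}\hookrightarrow H^{\alpha}_{p'}$, handling $|t|\le1$ by the ordinary Sobolev embedding $H^\alpha\hookrightarrow L^p$. The paper itself does not give a proof but simply cites Lemma~2.1 of Hayashi--Naumkin \cite{[5]} together with the Sobolev embedding $\|\phi\|_{L^p}\le C\|\phi\|_{H^\alpha}$ for $p\ge2$; your sketch is exactly the argument behind that citation, so there is no real difference in approach---you have just unpacked what the paper quotes.
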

This lemma comes from Lemma 2.1 in \cite{[5]} and the fact that $\|\phi\|_{L^p}\le C\|\phi\|_{H^\alpha}$ when  $p\ge 2$.

\begin{lemma}\label{L1.3}Assume $|V_\gamma (x)|\le |x|^{-\gamma}$ with $0<\gamma<n$,  $\ep_1, \ep_2, \ep_3\in\{+, -\}$.

(1). For $1<r<+\infty$, $1<p_1, p_2<+\infty$ and $p_3>r$   satisfying $1+\frac 1r=\frac\gamma n+\frac 1{p_1}+\frac 1{p_2}+\frac 1{p_3}$, we have
 $$\|(V_\gamma\ast\overline{w^{\ep_1}}w^{\ep_2})w^{\ep_3}\|_{L^r}\le \|w^{\ep_1}\|_{L^{p_1}} \|w^{\ep_2}\|_{L^{p_2}} \|w^{\ep_3}\|_{L^{p_3}}.$$

 (2). For $\rho>0$, $1<r<+\infty$, $1<p_{jk}<+\infty$ for $j, k\in\{1,2\}$ and $p_{13}, p_{23}>r$ satisfying $1+\frac 1r=\frac\gamma n+\frac 1{p_{j1}}+\frac 1{p_{j2}}+\frac 1{p_{j3}}$, we have
 \begin{eqnarray*}\|(V_\gamma\ast\overline{w^{\ep_1}}w^{\ep_2})w^{\ep_3}\|_{H^\rho_r}&\le& \|w^{\ep_1}\|_{H^\rho_{p_{11}}} \|w^{\ep_2}\|_{L^{p_{12}} } \|w^{\ep_3}\|_{L^{p_{13}}}+\|w^{\ep_1}\|_{L^{p_{12}}} \|w^{\ep_2}\|_{H^\rho_{p_{11}} } \|w^{\ep_3}\|_{L^{p_{13}}}\\
 & &+\|w^{\ep_1}\|_{L^{p_{21}}} \|w^{\ep_2}\|_{L^{p_{22}} } \|w^{\ep_3}\|_{H^\rho_{p_{23}}}\end{eqnarray*}
\end{lemma}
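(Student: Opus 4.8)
The plan is to reduce both parts to two classical ingredients: the Hardy--Littlewood--Sobolev (HLS) inequality, which converts convolution against $V_\gamma$ into a gain of integrability, and---for part (2)---the fractional Leibniz rule of Kato--Ponce type, which distributes $\ina^\rho$ across a product. Throughout I would use the pointwise bound $|V_\gamma\ast\psi|\le C(|x|^{-\gamma}\ast|\psi|)$, so that HLS applies with the homogeneous kernel $|x|^{-\gamma}$, $0<\gamma<n$; only the size bound on $V_\gamma$, not its precise form, is needed.

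For part (1) I would chain three steps. First, H\"older's inequality gives $\|\overline{w^{\ep_1}}w^{\ep_2}\|_{L^a}\le\|w^{\ep_1}\|_{L^{p_1}}\|w^{\ep_2}\|_{L^{p_2}}$ with $\frac1a=\frac1{p_1}+\frac1{p_2}$. Second, HLS yields $\|V_\gamma\ast\overline{w^{\ep_1}}w^{\ep_2}\|_{L^b}\le C\|\overline{w^{\ep_1}}w^{\ep_2}\|_{L^a}$ with $\frac1b=\frac1a+\frac\gamma n-1$. Third, H\"older again multiplies by $w^{\ep_3}$ using $\frac1r=\frac1b+\frac1{p_3}$, where the hypothesis $p_3>r$ is exactly what guarantees $\frac1b>0$. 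Adding the three exponent identities reproduces the balance $1+\frac1r=\frac\gamma n+\frac1{p_1}+\frac1{p_2}+\frac1{p_3}$, and the admissibility $1<a,b<\infty$ follows from $1<p_1,p_2<\infty$ and $0<\gamma<n$.

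For part (2), write $g=V_\gamma\ast(\overline{w^{\ep_1}}w^{\ep_2})$ so that the nonlinearity is $g\,w^{\ep_3}$. The fractional Leibniz rule splits $\|\ina^\rho(g\,w^{\ep_3})\|_{L^r}$ into one piece carrying $\ina^\rho$ on $g$ and one carrying it on $w^{\ep_3}$. For the piece $\|g\|_{L^{t_1}}\|\ina^\rho w^{\ep_3}\|_{L^{p_{23}}}$, estimating $\|g\|_{L^{t_1}}$ exactly as in part (1) (HLS then H\"older) produces the third term, and the exponent bookkeeping matches the $j=2$ scaling. For the other piece I would use that $\ina^\rho$ is a Fourier multiplier and therefore commutes with the translation-invariant convolution operator, so that $\ina^\rho g=V_\gamma\ast\ina^\rho(\overline{w^{\ep_1}}w^{\ep_2})$; then HLS, followed by a second fractional Leibniz rule applied to $\ina^\rho(\overline{w^{\ep_1}}w^{\ep_2})$, distributes the derivative onto $w^{\ep_1}$ or $w^{\ep_2}$ and yields the first two terms (with the roles of $p_{11}$ and $p_{12}$ interchanged between them), matching the $j=1$ scaling.

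The exponent accounting is routine and closes correctly once one checks that every intermediate index lies in $(1,\infty)$, which is precisely where the constraints $1<p_{jk}<\infty$ and $p_{j3}>r$ are used. The one step deserving genuine care is the commutation $\ina^\rho(V_\gamma\ast h)=V_\gamma\ast\ina^\rho h$: since $V_\gamma$ is specified only through a size estimate, I would justify it by the translation invariance of both operators acting on tempered distributions (equivalently, by density starting from smooth compactly supported $h$), and only afterwards invoke the pointwise kernel bound to apply HLS to the commuted expression.
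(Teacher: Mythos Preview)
Your proposal is correct and follows essentially the same route as the paper: H\"older plus Hardy--Littlewood--Sobolev for part (1), and for part (2) the fractional Leibniz (Kato--Ponce) rule---what the paper calls the ``generalized H\"older inequality'' from \cite{[6]}---applied first to the product $g\,w^{\ep_3}$ and then, after commuting $\ina^\rho$ through the convolution and applying HLS, to the inner product $\overline{w^{\ep_1}}w^{\ep_2}$. Your explicit remark on justifying the commutation $\ina^\rho(V_\gamma\ast h)=V_\gamma\ast\ina^\rho h$ is a point the paper uses without comment.
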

\begin{proof}To prove (1), put $\frac 1{p_4}=\frac 1r-\frac 1{p_3}$. By the H\"{o}lder inequality and the Hardy-Littlewood-Sobolev inequality, we have
\begin{eqnarray*} \|(V_\gamma\ast\overline{w^{\ep_1}}w^{\ep_2})w^{\ep_3}\|_{L^r}&\le&\|V_\gamma\ast\overline{w^{\ep_1}}w^{\ep_2}\|_{L^{p_4}}
 \|w^{\ep_3}\|_{L^{p_3}}\\
&\le&\|w^{\ep_1}\|_{L^{p_1}} \|w^{\ep_2}\|_{L^{p_2}} \|w^{\ep_3}\|_{L^{p_3}}\end{eqnarray*}
since  $1+\frac 1{p_4}=\frac\gamma n+\frac 1{p_1}+\frac 1{p_2}$.

 To prove (2), we set $\frac 1r=\frac 1{p_{14}}+\frac 1{p_{13}}$ and $\frac 1r=\frac 1{p_{24}}+\frac 1{p_{23}}$. For $\rho>0$,  the  generalized H\"{o}lder inequality in \cite{[6]} implies
$$\|(V_\gamma\ast\overline{w^{\ep_1}}w^{\ep_2})w^{\ep_3}\|_{H^\rho_r}\le  \|V_\gamma\ast\overline{w^{\ep_1}}w^{\ep_2}\|_{H^\rho_{p_{14}} } \|w^{\ep_3}\|_{L^{p_{13}}}+
 \|V\ast\overline{w^{\ep_1}}w^{\ep_2}\|_{L^{p_{24}}}  \|w^{\ep_3}\|_{H^\rho_{p_{23}} }$$
  By the generalized H\"{o}lder inequality and the Hardy-Littlewood-Sobolev inequality, we have
\begin{eqnarray*}\|V_\gamma\ast\overline{w^{\ep_1}}w^{\ep_2}\|_{H^\rho_{p_{14}} } &\le& \|V_\gamma\ast\ina^\rho(\overline{w^{\ep_1}}w^{\ep_2})\|_{L^{p_{14}}}\le\|\ina^\rho(\overline{w^{\ep_1}}w^{\ep_2})\|_{L^{p_{15}}} \\
&\le&  \|w^{\ep_1}\|_{H^\rho_{p_{11}}} \|w^{\ep_2}\|_{L^{p_{12}} } + \|w^{\ep_2}\|_{H^\rho_{p_{11}}} \|w^{\ep_1}\|_{L^{p_{12}} }\end{eqnarray*}
since $1+\frac 1{p_{14}}=\frac\gamma n+\frac 1{p_{15}}$ and $\frac 1{p_{15}}=\frac 1{p_{11}}+\frac 1{p_{12}}$.
Similarly we have
$$ \|V_\gamma\ast\overline{w^{\ep_1}}w^{\ep_2}\|_{L^{p_{24}}}\le \|w^{\ep_1}\|_{L^{p_{21}}} \|w^{\ep_2}\|_{L^{p_{22}} }.$$
\end{proof}

\section{Proof of Theorem \ref{T1}}

For $1<\gamma<\min\{\frac {2(n+1)}{n+2}, \frac{3n-2}{n+2}\}$, we choose
$$ \frac{(n+2)(\gamma+1)}{4n}+\frac 12<\beta< \frac{(n+2)(\gamma+1)}{2n}, q=\left(\frac{2\beta}{n+2}+\frac 12-\frac{\gamma+1}n\right)^{-1},$$
They satisfy
$$1\le\beta\le 2, 2<q<\frac{2n}{n+2(1-\gamma)}, 1<\gamma<\frac{3n\beta}{n+2}.$$
Let $\mu=\frac 12(1+\frac n2)(1-\frac 2q)$, we also have
$$\mu+\beta-2\le 0, \mu\le\beta-1, \mbox{and} \quad 0<\mu\le\frac 12.$$
Let $r, p$ and $s$ be chosen as
$$\frac 2r=\frac n2(1-\frac 2q),  \frac 2p+\frac\gamma n=2-\frac 2q, \frac 2s=1-\frac 2r.$$

{\it The proof of Theorem \ref{T1}(1).} Introduce the function space
$$X=\{v=(v^+, v^-)\in C(R;(L^2(R^n))^2); \quad \|v\|_X<+\infty\}$$
with the norm
\begin{eqnarray*}\|v\|_X&=&\|v\|_{L_t^\infty (R,H^\beta)}+\|v\|_{L_t^r(R,H^{\beta-\mu}_q)}+\|\partial_t v\|_{L_t^\infty (R,H^{\beta-1})}
+\|\partial_tv\|_{L_t^r(R,L^q)}\\
& &+\|Pv\|_{L_t^\infty (R,H^{\beta-1})}
+\|Pv\|_{L_t^r(R,L^q)}+\|Jv\|_{L_t^\infty (R,H^{\beta-1})}.\end{eqnarray*}
Denote by $X_\rho$ a ball of a radius $\rho>0$ with a center in the origin in the space $X$. Let us
consider the linearized version of (\ref{1.3})
\begin{equation}\left\{\begin{array}{l}L_\ep w^{\ep}=\ina^{-1}F_\gamma (v)\\
w^\ep|_{t=0}=w_0^\ep\end{array}\right.\label{3.3}\end{equation}
with a given vecter $v=(v^+,v^-)\in X_\rho$, where
$$F_\gamma (v)=\sum_{\ep_1,\ep_2,\ep_3\in \{+,-\}}C_{\ep_1\ep_2\ep_3}(V_\gamma\ast \overline{{v^{\ep_1}}}v^{\ep_2})v^{\ep_3}$$
with some given constants $C_{\ep_1\ep_2\ep_3}$.  The integration of the  linearized  Cauchy problem (\ref{3.3}) with respect to
time yields
\begin{equation}w^\ep=U_\ep (t)w_0^\ep+\sum_{\ep_1,\ep_2,\ep_3\in \{+,-\}}C_{\ep_1\ep_2\ep_3}\Psi_\ep((V_\gamma\ast \overline{{v^{\ep_1}}}v^{\ep_2})v^{\ep_3}).\label{3.4}\end{equation}
Taking the $L_t^\infty(R;H^\beta)$-norm of (\ref{3.4}), applying the H\"{o}lder inequality,  Lemma \ref{L1.1} and  Lemma \ref{L1.3}, we find
\begin{eqnarray}\|w^\ep\|_{L_t^\infty(R;H^\beta)}&\le&\|U_\ep (t)w_0^\ep\|_{L_t^\infty(R;H^\beta)}+C\sum_{\ep_1,\ep_2,\ep_3\in \{+,-\}} \|\Psi_\ep((V_\gamma\ast \overline{{v^{\ep_1}}}v^{\ep_2})v^{\ep_3})\|_{L_t^\infty(R;H^\beta)}\nonumber\\
&\le&\|w_0^\ep\|_{H^\beta}+C\sum_{\ep_1,\ep_2,\ep_3\in \{+,-\}} \|(V_\gamma\ast \overline{{v^{\ep_1}}}v^{\ep_2})v^{\ep_3}\|_{L_t^{r^\prime}(R;H^{\beta+\mu-1}_{q^\prime})}\nonumber\\
&\le&\|w_0\|_{H^\beta}+C\sum_{\ep_1,\ep_2,\ep_3\in \{+,-\}}\left \|\|{v^{\ep_1}}\|_{H^{\beta+\mu-1}_q} \|v^{\ep_2}\|_{L^p}\|v^{\ep_3}\|_{L^p}\right\|_{L_t^{r^\prime}(R)}\nonumber\\
&\le&\|w_0\|_{H^\beta}+ C\|{v}\|_{L_t^{r}(R; H^{\beta-\mu}_q)} \|v\|^2_{L^s_t(R; L^p)}\nonumber\\
&\le&\|w_0\|_{H^\beta}+C\rho  \|v\|^2_{L^s_t(R; L^p)}\label{3.5}\end{eqnarray}
since $p>2>q^\prime$, $q>2>q^\prime$, $\mu\le\frac 12$ and $2-\frac 2q=\frac\gamma n+\frac 2p$.  Similarly,  taking the $L_t^{r}(R; H^{\beta-\mu}_q)$  we obtain
\begin{eqnarray}\|w^\ep\|_{L_t^{r}(R; H^{\beta-\mu}_q)}&\le&\|U_\ep (t)w_0^\ep\|_{L_t^{r}(R; H^{\beta-\mu}_q)}+C\sum_{\ep_1,\ep_2,\ep_3\in \{+,-\}} \|\Psi_\ep((V_\gamma\ast \overline{{v^{\ep_1}}}v^{\ep_2})v^{\ep_3})\|_{L_t^{r}(R; H^{\beta-\mu}_q)}\nonumber\\
&\le&\|w_0^\ep\|_{H^\beta}+C\sum_{\ep_1,\ep_2,\ep_3\in \{+,-\}} \|(V_\gamma\ast\ \overline{{v^{\ep_1}}} v^{\ep_2})v^{\ep_3}\|_{L_t^{r^\prime}(R;H^{\beta+\mu-1}_{q^\prime})}\nonumber\\
&\le &\|w_0^\ep\|_{H^\beta}+C\|{v}\|_{L_t^{r}(R; H^{\beta+\mu-1}_q)} \|v\|^2_{L^s_t(R; L^p)}\nonumber\\
&\le&\|w_0 \|_{H^\beta}+C\rho  \|v\|^2_{L^s_t(R; L^p)}\label{3.6}\end{eqnarray}
since   $ \mu \le\frac 12$,  $p>2>q^\prime$, $q>2>q^\prime$ and $2-\frac 2q=\frac\gamma n+\frac 2p$.
Applying the operator $\partial_t$ to  (\ref{3.3}) we deduce that $\partial_t w^\ep$ satisfies the following system
\begin{equation*}\left\{\begin{array}{l}L_\ep \partial_t w^\ep=\ina^{-1}\partial_tF_\gamma (v)\\
\partial_t w^\ep|_{t=0}=-i\ep\ina w_0^\ep-i\ina^{-1}F_\gamma(v)|_{t=0}\end{array}\right.\end{equation*}
 with
$$F_\gamma (v)=\sum_{\ep_1,\ep_2,\ep_3\in \{+,-\}}C_{\ep_1\ep_2\ep_3}(V_\gamma\ast\overline{v^{\ep_1}}v^{\ep_2})v^{\ep_3}.$$
Then by integrating with respect to time,
$$\partial_tw^\ep=U_\ep (t)(\partial_tw^\ep|_{t=0})+\Psi_\ep(\partial_tF_\gamma (v)).$$
Taking the $L_t^\infty(R;H^{\beta-1})$-norm and $L^r_t(R,L^q)$-norm,  applying the H\"{o}lder inequality  and Lemma \ref{L1.1}  we find that, since $\beta\ge 1$, $\mu\le\beta-1$ and $\mu+\beta-2\le 0$,
\begin{eqnarray*}& &\|\partial_t w^\ep\|_{L_t^{\infty}(R; H^{\beta-1})}+\|\partial_tw^\ep\|_{L_t^{r}(R;L^q)}\\
&\le&\|\partial_tw^\ep|_{t=0}\|_{H^{\beta-1}}+\|\partial_t F_\gamma (v)\|_{L^{r^\prime}_t(R, H^{\mu+\beta-2}_{q^\prime})}\\
&\le&\|\partial_tw^\ep|_{t=0}\|_{H^{\beta-1}}\\
&&\quad +C\sum_{\ep_1,\ep_2,\ep_3\in \{+,-\}} \|(V_\gamma\ast (\overline{\partial_tv^{\ep_1}}v^{\ep_2}+\overline{v^{\ep_1}}\partial_tv^{\ep_2}))v^{\ep_3}+(V_\gamma\ast \overline{v^{\ep_1}}v^{\ep_2})\partial_tv^{\ep_3}\|_{L_t^{r^\prime}(R;L^{q^\prime})}\\
&\le&\|\partial_tw^\ep|_{t=0}\|_{H^{\beta-1}}+C\|\partial_tv\|_{L^r_t(R,L^q)}\|v\|^2_{L^s_t(R,L^p)}\\
&\le&\|\partial_tw^\ep|_{t=0}\|_{H^{\beta-1}}+C\rho\|v\|^2_{L^s_t(R,L^p)}\end{eqnarray*}
On the other hand, we have
$$\|\partial_tw^\ep|_{t=0}\|_{H^{\beta-1}}\le \|w^\ep_0\|_{H^{\beta}}+\|F_\gamma (v)\|_{L^\infty_t(R,H^{\beta-2})},$$
 and for $p_1>2$ satisfying $\frac 32=\frac\gamma{n}+\frac 3{p_1}$,
\begin{eqnarray*}\|F_\gamma (v)\|_{L^\infty_t(R,H^{\beta-2})}&\le& C\sum_{\ep_1,\ep_2,\ep_3\in \{+,-\}} \|(V_\gamma\ast \overline{ v^{\ep_1}}
v^{\ep_2})v^{\ep_3})\|_{L_t^{\infty}(R;L^2)}\\
&\le& C\|v\|^3_{L_t^{\infty}(R;L^{p_1})}\le  C\|v\|^3_{L_t^{\infty}(R;H^\beta)}\le C\rho^3\end{eqnarray*}
since $\beta\le 2$,$ \gamma\le 3\beta$ and  $\|v\|_{L^{p_1}} \le  C\|v\|_{H^\beta}$. Then
\begin{equation}\|\partial_t w^\ep\|_{L_t^{\infty}(R; H^{\beta-1})}+\|\partial_tw^\ep\|_{L_t^{r}(R;L^q)}\le C\|w_0\|_{H^\beta}+C\rho^3+C\rho \|v \|^2_{L^s_t(R,L^p)}.\label{3.9}\end{equation}

Notice that $P=t\bigtriangledown+x\partial_t$, $J_\ep=\ina x+i\ep t \bigtriangledown$ and $L_\ep=i\partial_t-\ep\ina$. We get
$$J_\ep=i\ep P-\ep L_\ep, \, [L_\ep, P]=-i\ep\ina^{-1}\bigtriangledown L_\ep,$$
$$[x, \ina]=\ina^{-1}\bigtriangledown,\,  [P, \ina^{-1}]=\ina^{-3}\bigtriangledown\partial_t$$
and
$$P((V_\gamma\ast \overline{v^{\ep_1}}v^{\ep_2})v^{\ep_3})=(V_\gamma\ast \overline{{v^{\ep_1}}}v^{\ep_2})P(v^{\ep_3})+(t\nabla V_\gamma\ast \overline{{v^{\ep_1}}}v^{\ep_2})v^{\ep_3}.$$
Applying the operator $P$ to (\ref{3.3}) yields
\begin{equation*}\left\{\begin{array}{l}L_\ep Pw^\ep=i\ep \ina^{-2}\nabla F_\gamma(v)-\ina^{-1}PF_\gamma(v)-\ina^{-3}\nabla \partial_tF_\gamma(v)\\
Pw^\ep|_{t=0}=x\partial_tw^\ep|_{t=0}=x(-i\ep\ina w_0^\ep-i\ina^{-1}F_\gamma(v)|_{t=0})\end{array}\right.\end{equation*}
 with
$$PF_\gamma (v)=\sum_{\ep_1,\ep_2,\ep_3\in \{+,-\}}C_{\ep_1\ep_2\ep_3}(V_\gamma\ast\overline{v^{\ep_1}}v^{\ep_2})Pv^{\ep_3}+(t\nabla V_\gamma\ast\overline{v^{\ep_1}}v^{\ep_2})v^{\ep_3}.$$
Integrating with respect to time, we get
\begin{equation}Pw^\ep=U_\ep (t)(Pw^\ep|_{t=0})-\Psi_\ep(i\ep\ina^{-1}\nabla F_\gamma(v))+\Psi_\ep(PF_\gamma(v))+\Psi_\ep(\ina^{-2}\nabla \partial_tF_\gamma (v)).\label{3.10}\end{equation}
Taking the $L_t^\infty(R;H^{\beta-1})$-norm  and the $L^r_t(R,L^q)$-norm of (\ref{3.10}), applying the H\"{o}lder inequality  and  Lemma \ref{L1.1},   we find
\begin{eqnarray}& &\|Pw^\ep\|_{L_t^{\infty}(R; H^{\beta-1})}+\|Pw^\ep\|_{L^r_t(R,L^q)}\nonumber\\
&\le& \|Pw^\ep|_{t=0}\|_{H^{\beta-1}}+\|\ina^{-1}\nabla F_\gamma\|_{L^{r^\prime}_t(R,L^{q^\prime})}\nonumber\\
&&\qquad +\|P F_\gamma(v)\|_{L_t^{r^\prime}(R; L^{q^\prime})}+\|\ina^{-2}\nabla \partial_t F_\gamma (v)\|_{L_t^{r^\prime}(R; L^{q^\prime})}\nonumber\\
&\le&\|Pw^\ep|_{t=0}\|_{H^{\beta-1}}+\|F_\gamma(v)\|_{L^{r^\prime}_t(R,L^{q^\prime})}\nonumber\\
&&\qquad +\|P F_\gamma(v)\|_{L_t^{r^\prime}(R; L^{q^\prime})}+\|\partial_t F_\gamma (v)\|_{L_t^{r^\prime}(R; L^{q^\prime})}\label{3.11}\end{eqnarray}
since $\beta\ge 1$ and $\mu+\beta-2\le 0$ and $\mu\le\beta-1$.
As in the proof of (\ref{3.9}) we deduce
\begin{eqnarray}& &\|F_\gamma(v)\|_{L^{r^\prime}_t(R,L^{q^\prime})}+\|\partial_t F_\gamma (v)\|_{L_t^{r^\prime}(R; L^{q^\prime})}\nonumber\\
&\le& C\|v\|_{L^r_t(R,L^q)}\|v\|^2_{L^s_t(R,L^p)}+C\|\partial_t v \|_{L^r_t(R,L^q)}\|v \|^2_{L^s_t(R,L^p)}
\le C\rho \|v\|^2_{L^s_t(R,L^p)}.\label{3.12}\end{eqnarray}
Let $p_3>2$ and $s_3>2$ satisfy
$$\frac 32-\frac 1q=\frac{\gamma+1}n+\frac 2{p_3}, 1-\frac 1r=\frac 2{s_3}.$$
The H\"{o}lder inequality and Lemma \ref{L1.3} imply
\begin{eqnarray}&&\|P F_\gamma(v)\|_{L_t^{r^\prime}(R; L^{q^\prime})}\nonumber\\
&\le&C\sum_{\ep_1,\ep_2,\ep_3\in \{+,-\}}\left[ \|(V_\gamma\ast \overline{v^{\ep_1}}v^{\ep_2})Pv^{\ep_3})\|_{L_t^{r^\prime}(R; L^{q^\prime})}+\|(t \nabla V_\gamma\ast \overline{v^{\ep_1}}v^{\ep_2})v^{\ep_3})\|_{L_t^{r^\prime}(R;L^{q^\prime})}\right]\nonumber\\
&\le& C\|Pv\|_{L^r_t(R,L^q)}\|v\|^2_{L^s_t(R,L^p)}+C\|v\|_{L^\infty_t(R,L^2)}\|t^{1/2} v \|^2_{L^{s_3}_t(R,L^{p_3})\nonumber}\\
&\le& C\rho\|v\|^2_{L^s_t(R,L^p)}+C\rho\|t^{1/2} v \|^2_{L^{s_3}_t(R,L^{p_3})},\label{3.13}\end{eqnarray}
here we use the condition $\|\nabla V_\gamma\|\le C|x|^{-(\gamma+1)}$.
By Lemma \ref{L1.2} we have
\begin{eqnarray}\|v\|_{L^s_t(R,L^p)}&\le& C\|\langle t\rangle^{-\frac n2(1-\frac 1p)}\left(\|v\|_{H^\alpha}+\|Jv\|_{H^{\alpha-1}}\right )\|_{L^s_t(R)}\nonumber\\
&\le& C \left (\|v\|_{L^\infty_t(R,H^\beta)}+\|J v\|_{L^\infty_t(R,H^{\beta-1})}\right )\le C\rho\label{3.14}\end{eqnarray}
since $\alpha=(1+\frac n2)(1-\frac 2p)\le \beta$ and $\frac n2(1-\frac 2p)>\frac 1s$. Similarly,
\begin{eqnarray}\|t^{1/2}v\|_{L^{s_3}_t(R,L^{p_3})}&\le& C\|\langle t\rangle^{-\frac n2(1-\frac 1{p_3})+\frac 12}\left(\|v\|_{H^{\alpha_3}}+\|Jv\|_{H^{\alpha_3-1}}\right )\|_{L^{s_3}_t(R)}\nonumber\\
&\le& C \left (\|v\|_{L^\infty_t(R,H^\beta)}+\|J v\|_{L^\infty_t(R,H^{\beta-1})}\right )\le C\rho,\label{3.15}\end{eqnarray}
since $\alpha_3=(1+\frac n2)(1-\frac 2{p_3})\le \beta$ and $\frac n2(1-\frac 2{p_3})>\frac 1{s_3}$.
Then we obtain, from (\ref{3.11})-(\ref{3.15}),
\begin{eqnarray}&&\|Pw^\ep\|_{L_t^{\infty}(R; H^{\beta-1})}+\|Pw^\ep\|_{L^r_t(R,L^q)}\le \|Pw^\ep|_{t=0}\|_{H^{\beta-1}}+C\rho^3,\label{3.16}\\
&&\|w^\ep\|_{L_t^\infty(R;H^\beta)}+\|w^\ep\|_{L_t^{r}(R; H^{\beta-\mu}_q)}\le\|w_0^\ep\|_{H^\beta}+C\rho^3,\label{3.16A}\\
&&\|\partial_t w^\ep\|_{L_t^{\infty}(R; H^{\beta-1})}+\|\partial_tw^\ep\|_{L_t^{r}(R;L^q)}\le\|w_0^\ep\|_{H^\beta}+C\rho^3,\label{3.16B}\end{eqnarray}

To estimate the term $ \|Pw^\ep|_{t=0}\|_{H^{\beta-1}}$,  we  give some estimates. It follows from the Sobolev embedding theorem that
\begin{eqnarray}\|F_\gamma(v)\|_{L^\infty_t(R,L^2)}&\le&C\sum_{\ep_1,\ep_2,\ep_3\in\{+,-\}}\|(V_\gamma\ast\overline{v^{\ep_1}}v^{\ep_2})
v^{\ep_3}\|_{L^\infty_t(R,L^2)}\nonumber\\
&\le& \|v\|^3_{L^\infty_t(R,L^{p_5})}\le C\|v\|^3_{L^\infty_t(R,H^\beta)}\le C\rho^3,\label{3.17}\end{eqnarray}
where  $p_5=\frac{6n}{3n-2\gamma}$, which satisfies $p_5\le\frac{2n}{n-2\beta}$ because of $\gamma\le3\beta$. Using the relation $x=\ina^{-1}J_\ep-i\ep t\ina^{-1}\nabla$ we deduce
\begin{eqnarray}& &\|x F_\gamma(v)\|_{L^\infty_t(R,L^2)}\le C\sum_{\ep_1,\ep_2,\ep_3\in\{+,-\}}\|(V_\gamma\ast\overline{v^{\ep_1}}v^{\ep_2})(xv^{\ep_3})\|_{L^\infty_t(R,L^2)}\nonumber\\
&\le&C\sum_{\ep_1,\ep_2,\ep_3\in\{+,-\}}\left\|\|v^{\ep_1}\|_{L^{p_4}}\|\|v^{\ep_2}\|_{L^{p_4}}\left(\|\ina^{-1}J_\ep v^{\ep_3})\|_{L^{p_4}}+t\|\ina^{-1}\nabla v^{\ep_3})\|_{L^{p_4}}\right)\right\|_{L^\infty_t(R)}\nonumber\\
 &\le&C\|v\|^2_{L^\infty_t(R,L^{p_4})} \|\ina^{-1}J v \|_{L^\infty_t(R,L^{p_4})}+C\|t^{1/3}v\|^3_{L^\infty_t(R,L^{p_4})}\nonumber\\
&\le&C\|v\|^2_{L^\infty_t(R,H^\beta)}\|Jv\|_{L^\infty_t(R,H^{\beta-1})}+C\left(\|v\|_{L^\infty_t(R,H^\beta)}+ \|Jv \|_{L^\infty_t(R,H^{\beta-1})}\right)^3\nonumber\\
 &\le&C\left(\rho+ \|Jv \|_{L^\infty_t(R,H^{\beta-1})}\right)^3\le C\rho^3,\label{3.18}\end{eqnarray}
 where $p_4=\frac{6n}{3n-2\gamma}$, which satisfies
$$2<p_4\le\frac{2n}{n-2\beta},\,  \frac n2(1-\frac 2{p_4})\ge\frac 13,\,  (1+\frac n2)(1-\frac 2{p_4})\le\beta$$
because of $1<\gamma\le\frac{3n\beta}{n+2}$. Using the relation $[\ina^{\beta-1}, x]=-(\beta-1)\ina^{\beta-3}\nabla$ we deduce
\begin{eqnarray*}& & \|Pw^\ep|_{t=0}\|_{H^{\beta-1}}\le\|x\ina w_0^\ep\|_{H^{\beta-1}}+\|x\ina^{-1}F_\gamma(v)\|_{L^\infty_t(R,H^{\beta-1})}\nonumber\\
&\le&\|x\ina w_0^\ep\|_{H^{\beta-1}}+\|\ina^{-1}x F_\gamma(v)\|_{L^\infty_t(R,H^{\beta-1})}+\|\ina^{-3}\nabla F_\gamma(v)\|_{L^\infty_t(R,H^{\beta-1})}\nonumber\\
&\le&\|\ina^{\beta-1}x\ina w_0^\ep\|_{L^2}+C\| x F_\gamma(v) \|_{L^\infty_t(R, L^2)}+C\| F_\gamma(v)\|_{L^\infty_t(R, L^2)}\nonumber\\
&\le&\|\langle x\rangle\ina^\beta w_0^\ep\|_{L^2}+C\rho^3+C\left(\rho+ \|Jv \|_{L^\infty_t(R,H^{\beta-1})}\right)^3\nonumber\\
&\le&\|w_0\|_{H^{\beta,1}}+C\rho^3,\label{3.19}\end{eqnarray*}
which, combining with (\ref{3.16}), yields
\begin{equation}\|Pw^\ep\|_{L_t^{\infty}(R; H^{\beta-1})}+\|Pw^\ep\|_{L^r_t(R,L^q)}\le \|w_0\|_{H^{\beta,1}}+C\rho^3.\label{3.20}\end{equation}

Notice that
$$[L_\ep,x]=-\ep\ina^{-1}\nabla,\,  [x,\ina^{-1}]=-\ina^{-3}\nabla.$$
Then we deduce that $xw^\ep$ satisfies
$$L_\ep(xw^\ep)=-\ep\ina^{-1}\nabla w^\ep-\ina^{-1}(xF_\gamma(v))+\ina^{-1}\nabla F_\gamma(v).$$
Using $J_\ep=i\ep P-\ep L_\ep x$ and (\ref{3.16A}) yields
$$\|J_\ep w^\ep\|_{L^\infty_t(R,H^{\beta-1})}\le \|P w^\ep\|_{L^\infty_t(R,H^{\beta-1})}+\|L_\ep(xw^\ep) \|_{L^\infty_t(R,H^{\beta-1})},$$
with
\begin{eqnarray*}&&\|L_\ep(xw^\ep) \|_{L^\infty_t(R,H^{\beta-1})}\\
&\le &\|w^\ep \|_{L^\infty_t(R,H^{\beta-1})}+\|\ina^{-2}F_\gamma(v)\|_{L^\infty_t(R,H^{\beta-1})}+\|\ina^{-1}(x F_\gamma(v))\|_{L^\infty_t(R,H^{\beta-1})}\\
&\le &\|w^\ep \|_{L^\infty_t(R,H^{\beta })}+\|F_\gamma(v)\|_{L^\infty_t(R,L^2)}+\|x F_\gamma(v)\|_{L^\infty_t(R,L^2)}
\le C\|w_0\|_{H^\beta}+C\rho^3.\end{eqnarray*} Then we get
\begin{equation}\|J_\ep w^\ep\|_{L^\infty_t(R,H^{\beta-1})}\le  C\|w_0\|_{H^{\beta,1}}+C\rho^3.\label{3.21}\end{equation}
A combination of (\ref{3.16}) with (\ref{3.16A}), (\ref{3.16B}), (\ref{3.20}) and (\ref{3.21}) yields
\begin{equation}\|w\|_X\le C\|w_0\|_{H^{\beta,1}}+C\rho^3.\label{3.22}\end{equation}
Therefore the map $M: w=M(v)$ defined by the problem (\ref{3.3}), transforms a ball $X_\rho$ with a small radius $\rho=C\|w^0\|_{H^{\beta, 1}}$ into itself. Denote
$\tilde w=M(\tilde v)$, then in the same way as in the proof of (\ref{3.22}) we have
$$\|M(v)-M(\tilde v)\|_X\le C\rho^2\|v-\tilde v\|_X.$$
Thus $M$ is a contraction mapping in $X_\rho$ and so there exists a unique solution $w =M(w)$ of (\ref{3.3}) if the norm $	 \|w^0\|_{H^{\beta, 1}}$ is small enough.

To prove the asymptotic of the solution $w(t,x)$, we use the equation, for $|t|>|t^\prime|$,
$$U_\ep(-t)w^\ep(t)-U_\ep(-t^\prime)w^\ep(t^\prime)=\int_{t^\prime}^tU_\ep(-\tau)\ina^{-1}F_\gamma(w(\tau))d\tau.$$
Taking the $H^\beta$-norm  of this equation, using the similar proof of (\ref{3.5}) and (\ref{3.6}), we deduce
$$\|U_\ep(-t)w^\ep(t)-U_\ep(-t^\prime)w^\ep(t^\prime)\|_{H^\beta}\le C\rho^2\langle t^\prime\rangle^{-\delta}$$
with $\delta=\frac{2n\beta}{n+2}-2>0$, since we have $\|w\|_X\le \rho$ and
$$\|\langle t\rangle^{-\frac n2(1-\frac 1p)}\|^2_{L^s_t([t^\prime, t])}\le C\langle t^\prime\rangle^{-\delta}.$$
Then there uniquely exist finial states $w^\ep_\pm\in H^\beta$ satisfying, for $\pm t$ large enough,
$$\|w^\ep(t)-U_\ep(t)w^\ep_\pm\|_{H^\beta}\le C\rho^2\langle t \rangle^{-\delta}.$$
Set $u(t)=\frac 12(w^+(t)-w^-(t))$, $f_\pm(x)=\frac 12(w_\pm^+-w_\pm^-)$, $ g_\pm(x)=-\frac i2\ina (w^+_\pm+w^-_\pm)$ and
$u_\pm(t)=\frac 12(U_+(t)w^+_\pm-U_-(t)w^-_\pm)$. Then $u(t)$ and $u_\pm(t)$ satisfy Theorem \ref{T1}(1).

{\it Proof of  Theorem \ref{T1}(2).} For given $(f_-,g_-)\in X^{\beta, 1}$ and $v=\{v^+, v^-)\in X_\rho$, we consider the linearized version of the final state problem of (\ref{3.3})
$$\left\{\begin{array}{l}L_\ep w^\ep=-\ina^{-1}F_\gamma(v)\\
\|U_\ep(t)w^\ep-w^\ep_-(x)\|_{H^\beta}\to 0 \, \mbox{ as} \, t\to\infty\end{array}\right.$$
with $w^\ep_-(x)=i\ina^{-1}g_-(x) -\ep f_-(x)\in H^{\beta,1}$. The integration with respective to time yields
$$w^\ep(t)=U_\ep(t)w^\ep_-+\int_{-\infty}^tU_\ep(t-\tau)\ina^{-1}F_\gamma(v(\tau))d\tau.$$
In the same way as in the proof of  Theorem \ref{T1}(1), we find that, if $\|(f_-,g_-)\|_{X^{\beta, 1}}\le\rho$ small,  there uniquely exists a global solution $w^\ep(t)\in C(R, H^\beta)$ and a final state $w_+^\ep\in H^\beta$ such that, as $t\to +\infty$,
$$\|w^\ep(t)-U_\ep(t)w_+^\ep\|_{H^\beta}\le C\langle t \rangle^{-\delta}$$
with $\delta=\frac{2n\beta}{n+2}-2>0$.
Set $u(t)=\frac 12(w^+(t)-w^-(t))$, $f_+(x)=\frac 12(w_+^+-w_+^-)$, $ g_+(x)=-\frac i2\ina (w^+_++w^-_+)$ and
$u_+(t)=\frac 12(U_+(t)w^+_+-U_-(t)w_+^-)$. Then $u(t)$ and $u_+(t)$ satisfy Theorem \ref{T1}(2).

\end{document}